\documentclass[a4paper,11pt]{amsart}

\usepackage{amssymb}
\usepackage{amstext}
\usepackage{amsmath}
\usepackage{amscd}
\usepackage{amsthm}
\usepackage{amsfonts}
\usepackage{enumerate}
\usepackage{graphicx}
\usepackage{color}
\usepackage{here} % 図や表を強制的に出力させる
\usepackage{bm} % 太字ベクトル
\usepackage{mathrsfs}
\usepackage{mathtools}
\usepackage{latexsym}
\usepackage{booktabs}
\usepackage{fancyhdr}
\usepackage{subcaption}
\usepackage{enumitem}
\usepackage{tikz}
\usepackage{tikz-cd}
\usetikzlibrary{arrows}
\usetikzlibrary{decorations.markings}
\usetikzlibrary{patterns,decorations.pathreplacing}
\usepackage{mathtools}
\usepackage{pdfpages}
\usepackage[section]{algorithm}
\usepackage{algpseudocodex}

\usepackage{hyperref}
\hypersetup{
  colorlinks=true,
  citecolor=red,
  linkcolor=blue
}
\theoremstyle{plain}
\newtheorem{theorem}{Theorem}[section]
\newtheorem{lemma}[theorem]{Lemma}
\newtheorem{proposition}[theorem]{Proposition}
\newtheorem{corollary}[theorem]{Corollary}

\theoremstyle{definition}

\theoremstyle{remark}

\newcommand{\kk}{\Bbbk}

\newcommand{\lessdotrel}{\mathrel{\lessdot}}

\newcommand{\JJ}{\operatorname{J}}
\newcommand{\MM}{\operatorname{M}}

\DeclareMathOperator{\ini}{in}

\title{Algebras with straightening laws on join- or meet-semidistributive lattices}

\author{Koji Matsushita}
\address{Graduate School of Mathematical Sciences,
The University of Tokyo,
Komaba, Meguro-ku, Tokyo 153-8914, Japan}
\email{koji-matsushita@g.ecc.u-tokyo.ac.jp}

\author{Sora Miyashita}
\address{Department of Pure and Applied Mathematics,
Graduate School of Information Science and Technology,
The University of Osaka,
Suita, Osaka 565-0871, Japan}
\email{u804642k@ecs.osaka-u.ac.jp}

\author{Koichiro Tani}
\address{Department of Pure and Applied Mathematics,
Graduate School of Information Science and Technology,
The University of Osaka,
Suita, Osaka 565-0871, Japan}
\email{tani-k@ist.osaka-u.ac.jp}

\keywords{Algebra with straightening laws, Join-semidistributive lattices, Meet-semidistributive lattices}
\subjclass[2020]{Primary 13F50; Secondary 06A11, 06B05, 13C14}
\begin{document}

\begin{abstract}
We study algebras with straightening laws on join- or meet-semidistributive lattices.
We show that for a join-semidistributive (resp. meet-semidistributive) lattice, meet-distributivity (resp. join-distributivity) and Cohen--Macaulayness are equivalent, and that integrality implies these conditions. Thus, Hibi's conjecture that every integral lattice is Cohen--Macaulay holds for join- or meet-semidistributive lattices. For semidistributive lattices, distributivity, integrality and Cohen--Macaulayness are equivalent.
\end{abstract}

\maketitle

\section{Introduction}

We assume that all posets and lattices appearing in this paper are finite.

Algebras with straightening laws (ASLs) provide a useful framework for studying
commutative algebras whose structure is governed by a partially ordered set.
A poset $P$ is called \emph{integral} if there exists a homogeneous ASL domain
on $P$ over a field.
It is therefore natural to ask which posets, and in particular which lattices,
are integral.
Hibi proved that every distributive lattice is integral and conjectured that
every integral lattice is Cohen--Macaulay~\cite{H87}.
Although the analogous statement for arbitrary posets is false
~\cite{Terai94}, the conjecture for lattices provides a natural problem
relating the algebraic properties of ASLs to the combinatorial structure of
lattices.

In this paper, we focus on join- or meet-semidistributive lattices.
These form fundamental classes in lattice theory, and semidistributive lattices
also arise naturally in representation theory; for example, the lattice of
torsion classes of a finite-dimensional algebra is completely
semidistributive~\cite{DIRRT}.

Our main result describes Cohen--Macaulayness and integrality in terms of
meet- or join-distributivity:
\begin{theorem}\label{thm:main1}
    Let $L$ be a join-semidistributive (resp. meet-semidistributive) lattice.
    Then the following statements hold:
\begin{enumerate}[label=(\arabic*),font=\normalfont,leftmargin=*]
        \item[(1)] $L$ is meet-distributive (resp. join-distributive) if and only if $L$ is Cohen--Macaulay over an arbitrary field. 
        % \begin{itemize}
        %     \item[(i)] $L$ is meet-distributive (resp. join-distributive);
        %     \item[(ii)] $L$ is Cohen--Macaulay over an arbitrary field;
        %     \item[(iii)] every ASL on $L$ satisfies Serre's condition $(S_2)$ over every field;
        % \end{itemize}
        \item[(2)] If $L$ is integral, then $L$ is meet-distributive (resp. join-distributive).
        \item[(3)] There exists a join-semidistributive lattice $L$ that is meet-distributive but not integral.
        \item[(4)] There exists a join-semidistributive lattice $L$ that is integral but not distributive.
    \end{enumerate}
\end{theorem}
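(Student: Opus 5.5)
By the duality $L \leftrightarrow L^{\mathrm{op}}$, I will treat only the join-semidistributive case. Cohen--Macaulayness of a poset and the existence of a homogeneous ASL domain are both invariant under passing to the opposite poset: order complexes coincide, and ASL axioms are symmetric under reversing the order together with the standard monomial structure. Throughout, $L$ is join-semidistributive.

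For (1), I will use the standard facts that $L$ is Cohen--Macaulay over a field $\kk$ iff the order complex $\Delta(L)$ is, and that a lattice is Cohen--Macaulay iff every interval $[x,y]$ has the reduced homology of its proper part concentrated in top degree. (Equivalently, by Reisner's criterion, links of $\Delta(\hat L)$ are intervals, and the poset must be graded.)

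For the forward direction I use that a meet-distributive lattice is a convex geometry (antimatroid) lattice. By Edelman's theorem, every interval of such a lattice has proper part either contractible or homotopy equivalent to a sphere of dimension (number of atoms of the interval) $-2$, the latter occurring exactly when the interval is Boolean. Moreover, meet-distributive lattices are graded, since all maximal chains from $\hat 0$ have length equal to the number of join-irreducibles. Hence every interval is graded of rank $r$, and its proper part is contractible or a sphere of dimension $r-2$. This gives Cohen--Macaulayness over every field.

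For the converse, assume $L$ is join-semidistributive and Cohen--Macaulay but not meet-distributive. Meet-distributivity is equivalent to requiring that, for every $y$, the interval $[x,y]$ is Boolean, where $x$ is the meet of the lower covers of $y$. Choose $y$ and $x$ violating this, and make the interval minimal. By Cohen--Macaulayness, $[x,y]$ is graded. The key step is that in a join-semidistributive lattice the crosscut of coatoms computes homotopy: by the crosscut theorem, the proper part of $[x,y]$ is homotopy equivalent to the complex of subsets of coatoms with meet $\neq x$. Since their total meet is $x$, and every proper subfamily has meet $>x$ (I will argue this from minimality and join-semidistributivity), this complex is the boundary of a simplex on $k$ coatoms, a $(k-2)$-sphere. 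Cohen--Macaulayness forces $k-2=\mathrm{rank}[x,y]-2$, so the rank equals the number $k$ of coatoms. I then use join-semidistributivity to show that a graded interval whose coatom-meets are all distinct and whose rank is $k$ must be Boolean. I would try to do this by induction on rank: each $[c_i,y]$ is a two-element chain, and join-semidistributivity prevents two distinct elements with the same "coatom-meet set" from existing. This is the main obstacle, and I expect it to need a careful use of the fact that in semidistributive settings join-irreducibles and canonical joinands behave like atoms of a Boolean lattice.

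For (2), I will show that integrality forces the Cohen--Macaulay-type obstruction to vanish, or more directly that it forces meet-distributivity. If $L$ is integral, then every interval $[x,y]$ with $y$ covered appropriately carries an ASL domain quotient. A standard consequence (Hibi) is that an integral lattice has, for any incomparable $a,b$, a straightening relation $ab=(a\wedge b)(a\vee b)+\cdots$ whose leading term is nonzero, since the ring is a domain. Taking $y$ with lower covers $c_1,\dots,c_k$ that fail the Boolean condition, I will derive that some product of pairwise incomparable lower covers lies in the ideal generated by elements not $\ge$ their meet. This contradicts the domain property. I would combine this with (1), reducing to showing graded plus coatoms-rank equality for integral $L$.

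For (3) and (4), I will give explicit small lattices. For (3), I take a meet-distributive lattice that is join-semidistributive but fails a known necessary condition for integrality, such as a Hibi-type condition on the number of join-irreducibles versus rank being violated by dimension counting of ASLs (Krull dimension equals rank$+1$, and Hilbert function constraints). For (4), I take the smallest non-distributive convex geometry that is join-semidistributive, for instance the 7-element one from a 3-point convex geometry. I then exhibit a semigroup/toric ring, or a Gröbner degeneration, giving an ASL domain on it.
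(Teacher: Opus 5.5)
Part (2) is the decisive gap: your sketch has no working mechanism. The ingredients you invoke are not available. No result gives ASL domains on intervals; quotients of ASL domains by poset ideals are ASLs but need not be domains. The ``standard consequence'' that $(a\wedge b)(a\vee b)$ occurs in every straightening relation is false as stated: the paper's ASL domain on $L_7$ has $x_1x_2=x_0^2$, with no $x_0x_6$ term. Even granting these, an element of a domain lying in some ideal contradicts nothing.

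Falling back on (1) does not help either. Your converse obtains $\operatorname{rank}[y_\downarrow,y]=\#\{\text{lower covers of }y\}$ from vanishing homology of open intervals. Integrality does not give you that; obtaining it in general is Hibi's conjecture itself. The missing idea is to use only what integrality does give. Since $\ini(I_R)=J_L$ and $I_R$ is prime, Kalkbrener--Sturmfels shows that $\Delta(L)$ is pure and strongly connected, i.e.\ $G(L)$ is connected. You then need the combinatorial Theorem~\ref{thm:char_MD}. With the labeling $\lambda(x,y)=\min\{z: x\vee z=y\}\in\JJ(L)$, labels along a saturated chain are distinct, and opposite edges of a diamond $u\lessdot p,q\lessdot v$ carry equal labels. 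Hence $\Lambda(C)$ is constant on components of $G(L)$. Since each $j\in\JJ(L)$ labels $j_*\lessdot j$, connectivity gives $\Lambda(C)=\JJ(L)$ for every $C$. So all maximal chains have length $|\JJ(L)|$, and $L$ is MD by Proposition~\ref{prop:cz}.

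Routing through $G(L)=G(L^d)$ also handles the MSD case. Your claim that integrality is self-dual is unjustified: (A2) constrains the smallest element of each standard monomial and is not symmetric. For example, the poset $\{c<a,\,c<b\}$ is integral via $x_ax_b=x_c^2$, while its dual forces $x_ax_b=0$.

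Part (3) is likewise not proved. You name no lattice, and the obstruction you suggest is vacuous: an MD lattice always has rank $|\JJ(L)|$, and every ASL on $L$ has Krull dimension $\operatorname{rank}L+1$. The paper uses a finer invariant. For the MD lattice $L_{13}$, $h(\kk[\Delta(L_{13})])=(1,7,6,1)$. Any ASL domain $R$ on it is CM by (1) and Theorem~\ref{thm:universal-asl}, and has the same $h$-vector by Proposition~\ref{prop:h-vector}. This contradicts Stanley's inequality $h_0+h_1\le h_3+h_2$ for CM domains. For (4), your lattice (three collinear points) is the paper's $L_7$, but the ASL domain must actually be exhibited. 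The paper does this via $(ab,ab^2,a,abc,ab^2c,ac,d)$ and a revlex Gr\"obner basis with initial ideal $J_{L_7}$.

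Your route for (1), by contrast, is a valid alternative to shellability plus the exchange graph, and the forward direction via homotopy types of intervals is fine. The step you flag also closes. From $c_i\vee c_j=y$ for $j\ne i$, JSD gives $\bigwedge_{j\ne i}c_j\not\le c_i$, with no minimality needed. This makes the crosscut complex $\partial\Delta^{k-1}$, and it makes the $2^k$ partial meets $m_S=\bigwedge_{j\in S}c_j$ pairwise distinct. Rank $k$ then forces $m_S$ to have rank $k-|S|$. Any further lower cover $z$ of $c_i$ would satisfy $z\vee x=c_i$ by JSD, which is impossible. So $[x,c_i]$ satisfies the same hypotheses with $k-1$; induct on $[x,c_i]$, not $[c_i,y]$, to get $[x,y]=\{m_S\}\cong B_k$. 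This argument is homological, however, and cannot be reused for (2).
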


Theorem~\ref{thm:main1}~(1) and (2) give the following partial affirmative
answer to Hibi's conjecture.
In fact, the conclusion is stronger in the sense that Cohen--Macaulayness
holds over every field.

\begin{corollary}
    Let $L$ be a join- or meet-semidistributive lattice.
    If $L$ is integral, then $L$ is Cohen--Macaulay over an arbitrary field.
\end{corollary}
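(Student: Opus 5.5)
The corollary follows by combining parts (1) and (2) of Theorem~\ref{thm:main1}; no new input is required. By symmetry, I treat the join-semidistributive case first. The meet-semidistributive case is identical, with the roles of meet and join exchanged (or one can pass to the dual lattice, as explained below).

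Let $L$ be join-semidistributive and assume $L$ is integral, so there is a homogeneous ASL domain on $L$ over some field. Theorem~\ref{thm:main1}~(2) shows that $L$ is meet-distributive. Theorem~\ref{thm:main1}~(1), direction ``meet-distributive $\Rightarrow$ Cohen--Macaulay'', then gives that $L$ is Cohen--Macaulay over an arbitrary field. Here ``Cohen--Macaulay'' is understood in the sense used in the paper: the order complex of $L$, or equivalently the discrete ASL/Stanley--Reisner ring of $L$, is Cohen--Macaulay over every field. Cohen--Macaulayness of the discrete ASL transfers to every ASL on $L$ by the standard deformation argument.

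If $L$ is meet-semidistributive, the ``resp.'' versions of (2) and (1) apply in the same way. Integrality forces join-distributivity, and join-distributivity is equivalent to Cohen--Macaulayness over every field. Alternatively, one can pass to the dual lattice $L^{\mathrm{op}}$. This is legitimate for two reasons. First, the order complex of $L^{\mathrm{op}}$ equals that of $L$, so Cohen--Macaulayness is self-dual. Second, reversing the order in the ASL axioms shows that $L$ is integral if and only if $L^{\mathrm{op}}$ is, since the standard monomials and straightening relations are symmetric under order reversal.

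The only point needing care is that the field in the integrality hypothesis may differ from the field over which Cohen--Macaulayness is asserted. This causes no difficulty. Part (2) outputs a purely combinatorial property, meet- or join-distributivity, that is independent of the field. Part (1) then yields Cohen--Macaulayness over every field. So there is no real obstacle beyond invoking the two parts of Theorem~\ref{thm:main1} in the right order.
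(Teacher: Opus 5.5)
Your argument is correct and is exactly the paper's route: apply Theorem~\ref{thm:main1}~(2) to get meet- (resp.\ join-) distributivity, then the forward direction of Theorem~\ref{thm:main1}~(1). Your remark that the field in the integrality hypothesis need not match the target field is also handled correctly.

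Drop the ``alternatively'' justification via $L^{\mathrm{op}}$, though. Axiom (\ref{A2}) is not symmetric under order reversal: it asks that the \emph{smallest} element $\gamma_{i_1}$ of each standard monomial lie below $\alpha,\beta$. So self-duality of integrality does not follow that way. It also isn't needed, since the paper's duality passes through the self-dual graph $G(L)=G(L^d)$ and never dualizes integrality itself.
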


We also obtain a particularly simple characterization in the semidistributive
case.
Indeed, a lattice that is both meet- and join-distributive is
semidistributive and modular, and hence distributive.
Moreover, every distributive lattice is integral~\cite{H87}.
Together with Theorem~\ref{thm:main1}, this yields the following:

\begin{corollary}\label{cor:main2}
Let $L$ be a semidistributive lattice.
The following conditions are equivalent:
\begin{enumerate}[label=(\arabic*),font=\normalfont,leftmargin=*]
\item[(i)] $L$ is distributive;
\item[(ii)] $L$ is Cohen--Macaulay over an arbitrary field;
\item[(iii)] $L$ is integral.
% \item[(i)] every ASL on $L$ is Cohen--Macaulay over every field;
% \item[(ii)] every ASL on $L$ satisfies Serre's condition $(S_2)$ over every field;
% \item[(iii)] $L$ is integral;
% \item[(iv)] $L$ is distributive;
\end{enumerate}
\end{corollary}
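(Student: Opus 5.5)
We derive Corollary~\ref{cor:main2} by combining Theorem~\ref{thm:main1} with two inputs: Hibi's theorem that distributive lattices are integral~\cite{H87}, and the lattice-theoretic fact that a semidistributive lattice which is both meet- and join-distributive is distributive. We prove the cycle (i)$\Rightarrow$(iii)$\Rightarrow$(ii)$\Rightarrow$(i).

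For (i)$\Rightarrow$(iii), Hibi showed that every distributive lattice is integral; the Hibi ring is a homogeneous ASL domain on $L$ over any field. For (iii)$\Rightarrow$(ii), since $L$ is semidistributive, it is in particular join-semidistributive. So Theorem~\ref{thm:main1}~(2) shows that an integral $L$ is meet-distributive, and Theorem~\ref{thm:main1}~(1) then shows that $L$ is Cohen--Macaulay over an arbitrary field.

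For (ii)$\Rightarrow$(i), apply Theorem~\ref{thm:main1}~(1) twice. Viewing $L$ as join-semidistributive, Cohen--Macaulayness gives meet-distributivity. Viewing $L$ as meet-semidistributive, it gives join-distributivity.

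It remains to show that a semidistributive lattice that is both meet- and join-distributive is distributive. This is the only genuinely non-formal step, and it rests on classical facts.
\begin{itemize}
\item Meet-distributivity (every interval $[x,y]$, with $x$ the meet of the elements covered by $y$, is Boolean) implies lower semimodularity.
\item Dually, join-distributivity implies upper semimodularity.
\item A finite lattice that is both upper and lower semimodular is modular.
\item A modular lattice is distributive exactly when it contains no $M_3$ as a sublattice (Birkhoff's criterion).
\end{itemize}
Now $M_3$ is not semidistributive. Indeed, if $a,b,c$ are its atoms, then $a\vee b=a\vee c=1$ while $a\vee(b\wedge c)=a\neq 1$. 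Semidistributivity passes to sublattices, so $L$ has no $M_3$ sublattice. Hence $L$ is modular with no $M_3$, and is therefore distributive.

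If one prefers to avoid citing the implications from meet- and join-distributivity to semimodularity, one can argue directly. Meet-distributive lattices are exactly those in which every element has a unique irredundant meet-decomposition, and these are known to be lower semimodular. The main care needed is in matching the paper's definitions of meet- and join-distributivity with these standard characterizations.
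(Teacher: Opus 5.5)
Your proof is correct and follows essentially the paper's route. You use Hibi's theorem for (i)$\Rightarrow$(iii) and Theorem~\ref{thm:main1} for the other implications, and you show that a lattice which is both MD and JD is modular with no $M_3$, hence distributive; you spell this last step out in more detail than the paper (the two semimodularity inputs are exactly Proposition~\ref{prop:cz}(ii)). The one slip is in your optional last paragraph, which your main argument does not use: under the paper's convention, MD lattices are those with unique irredundant \emph{join}-decompositions, whereas unique irredundant meet-decompositions characterize JD lattices, and those are upper rather than lower semimodular.
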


%The main lattice-theoretic ingredient in the proof is a characterization of meet-distributivity in terms of the maximal-chain exchange graph.
The main lattice-theoretic ingredient in the proof is a characterization, in terms of the maximal-chain exchange graph, of meet-distributivity.
More precisely, we prove that a join-semidistributive lattice is
meet-distributive if and only if its maximal-chain exchange graph is connected (Theorem~\ref{thm:char_MD}).
The dual statement holds for meet-semidistributive lattices.
This characterization, together with known connectivity properties of
Cohen--Macaulay and integral posets, yields Theorem~\ref{thm:main1}~(1) and (2).

\medskip

This paper is organized as follows.
In Section~\ref{sec:ASL}, we recall basic facts on ASLs and order complexes, including their Cohen--Macaulayness and $h$-vectors.
In Section~\ref{sec:lattice}, we study the maximal-chain exchange graph of a semidistributive lattice and prove the characterization of meet- and join-distributivity described above.
In Section~\ref{sec:proof}, we prove Theorem~\ref{thm:main1}.

\section{ASLs and order complexes}\label{sec:ASL}
In this section, we recall basic facts on ASLs and order complexes, together
with some results on Cohen--Macaulayness, $h$-vectors, and maximal-chain
exchange graphs that will be used later.
For basic facts on algebras with straightening laws and Stanley--Reisner
theory, we refer the reader to~\cite{EisenbudASL,StanleyCCA}.

Let $R = \bigoplus_{n\ge0} R_n$ be a Noetherian graded algebra over a field $R_0=\kk$.
Let $P$ be a poset and suppose that an injection $\varphi: P \hookrightarrow \bigoplus_{n\ge 1} R_n$ for which the $\kk$-algebra $R$ is generated by $\varphi(P)$ over $\kk$ is given.
%A {\em standard monomial} is a homogeneous element of $R$ of the form $\varphi(\gamma_1) \varphi(\gamma_2)\cdots \varphi(\gamma_s)$, where $\gamma_1 \leq \gamma_2 \leq \cdots \leq \gamma_s$ in $P$. 
A \emph{standard monomial} is an element of $R$ of the form $\varphi(\gamma_1)\varphi(\gamma_2)\cdots\varphi(\gamma_s)$,
where $s\geq 1$ and $\gamma_1\leq\gamma_2\leq\cdots\leq\gamma_s$ in $P$.
The empty product $1$ is also regarded as a standard monomial.
We call $R$ an \emph{algebra with straightening laws}($=$\emph{ASL}) on $P$ over $\kk$ if the following conditions are satisfied:
\begin{enumerate}[label=(A\arabic*), ref=A\arabic*]
\item\label{A1}
The set of standard monomials is a $\kk$-basis of $R$;
\item\label{A2}
If $\alpha$ and $\beta$ in $P$ are incomparable and if
\begin{align*}\label{ASL}
\varphi(\alpha)\varphi(\beta)
= \sum_{i} r_i\,\varphi(\gamma_{i_1})\varphi(\gamma_{i_2}) \cdots \varphi(\gamma_{i_p}) , \quad 0 \neq r_i \in \kk, \quad \gamma_{i_1}\leq \gamma_{i_2} \leq \cdots \leq \gamma_{i_p},
\end{align*}
is the unique expression for $\varphi(\alpha)\varphi(\beta) \in R$ as a linear combination of distinct standard monomials guaranteed by (\ref{A1}), then $\gamma_{i_1} \leq \alpha, \beta$ for every $i$.
\end{enumerate}
Note that the right-hand side of the relation in (\ref{A2}) is allowed to be the empty sum $(=0)$.
%We abbreviate an algebra with straightening laws as ASL. The relations in (A2) are called the {\em straightening relations} for $R$.

We say that a Noetherian graded algebra $R = \bigoplus_{n\ge 0} R_n$ over a field $R_0=\kk$ is \emph{homogeneous} if $R=\kk[R_1]$.
A poset $P$ is called {\em integral} if there exists an ASL domain
$R=\bigoplus_{n\ge0}R_n$ on $P$ over a field $R_0=\kk$ with an injection
$\varphi:P\hookrightarrow R_1$.
In particular, $R$ is a homogeneous domain.

Suppose that $R = \bigoplus_{n\ge 0} R_n$ is a homogeneous algebra over a field $R_0=\kk$.
The \textit{Hilbert series} of $R$ is defined as the formal power series $\sum_{n\ge 0}(\dim_\kk R_n)t^n$, and it is known that we can write the Hilbert series of $R$ as the following form:
\[
    \sum_{n \ge 0} (\dim_\kk R_n) t^n=\frac{h_0+h_1t+\cdots+h_st^s}{(1-t)^{\dim R}},
\]
where $h_s \neq 0$.
We call the sequence $(h_0,h_1,\ldots,h_s)$ the \textit{$h$-vector} of $R$, denote it by $h(R)$.
We refer the reader to \cite{S78} for detailed information on $h$-vector of homogeneous algebras.

% Let $R = \bigoplus_{n=0}^{\infty} R_n$ be a homogeneous ASL on a poset $P$ over a field $R_0=K$ with an injection $\varphi: P \hookrightarrow R_1$ and $\Delta(P)$ the order complex of $P$.  It follows from (ASL-1) that the $h$-vector of $R$ is equal to the nonzero components of the $h$-vector $h(\Delta(P))$ of $\Delta(P)$.  In other words, if $h(\Delta(P))=(h_0,h_1, \ldots, h_s, 0,\ldots,0)$ with $h_s \neq 0$, then $h(R) = (h_0,h_1,\ldots, h_s)$.  We refer the reader to \cite{StaGREEN} and \cite{HIBIred} for the background on combinatorics of simplicial complexes and their $h$-vectors.

\medskip

A \emph{chain} of $P$ is a subset of pairwise comparable elements, and a \emph{maximal chain} is a chain maximal under inclusion.
The \emph{order complex} $\Delta(P)$ is the simplicial complex whose faces are the chains of $P$ ; its \emph{facets}, that is, its maximal faces, are the maximal chains. A simplicial complex is \emph{pure} if all its facets have the same dimension.
For a pure simplicial complex, the \emph{facet-adjacency graph} has the facets as vertices, with two facets adjacent when their intersection has codimension one in each.
A pure simplicial complex is \emph{shellable} if its facets can be ordered $F_1,\ldots,F_m$ so that, for every $j>1$, the intersection $F_j\cap\bigcup_{i<j}F_i$ is pure of codimension one in $F_j$; in particular, it is \emph{strongly connected}, that is, its facet-adjacency graph is connected.
The \emph{maximal-chain exchange graph} $G(P)$ has the maximal chains as vertices. Two maximal chains $C,C'$ are adjacent when \(|C\setminus C'|=|C'\setminus C|=1\).
When $\Delta(P)$ is pure, this is precisely its facet-adjacency graph.

\medskip

Let $S_P=\kk[x_{\alpha} : \alpha \in P]$ denote the polynomial ring in $|P|$ variables over $\kk$.
We denote the Stanley--Reisner ideal and Stanley--Reisner ring of $\Delta(P)$ by
\[
J_P:=(x_\alpha x_\beta : \alpha, \beta\in P \text{ are incomparable}) \quad  \text{ and }\quad \kk[\Delta(P)]:=S_P/J_P.
\]
Note that $\kk[\Delta(P)]$ is a homogeneous ASL on $P$ over $\kk$, called the \emph{discrete ASL}.
We say that $P$ is \emph{Cohen--Macaulay over $\kk$} if $\kk[\Delta(P)]$ is Cohen--Macaulay.

Suppose that $R=\bigoplus_{n\ge0}R_n$ is an ASL on $P$ over a field
$R_0=\kk$ with an injection
$\varphi:P\hookrightarrow\bigoplus_{n\ge1}R_n$.
Define the surjective ring homomorphism $\pi:S_P\to R$ by
$\pi(x_\alpha)=\varphi(\alpha)$, and set $I_R:=\mathrm{Ker}(\pi)$, so that
$R\cong S_P/I_R$.
Give $S_P$ the grading $\deg x_\alpha=\deg\varphi(\alpha)$.
Then $I_R$ is homogeneous with respect to this grading, and, with respect
to a (weighted) degree reverse lexicographic order associated to a total
order on $P$ refining its partial order, one has $\operatorname{in}(I_R)=J_P$ 
(see, e.g., \cite[Section~3.1]{CV20} or \cite[Section~5]{C07}).
%(\cite[Remark~3.8]{CV20}, citing \cite[Lemma~5.5]{C07}).
Thus, the discrete ASL $\kk[\Delta(P)]$ is a Gr\"obner degeneration of $R$.
This leads to the following important facts:

\begin{proposition}\label{prop:h-vector}
    Let $P$ be a poset and let $R=\bigoplus_{n\ge 0}R_n$ be a homogeneous ASL on $P$ over a field $R_0=\kk$.
    Then one has $h(R)=h(\kk[\Delta(P)])$.
\end{proposition}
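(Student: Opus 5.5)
The plan is to compare Hilbert functions directly via axiom (\ref{A1}). Since $R$ is homogeneous with $\varphi:P\hookrightarrow R_1$, every standard monomial $\varphi(\gamma_1)\cdots\varphi(\gamma_s)$ is homogeneous of degree $s$. By (\ref{A1}), the standard monomials form a $\kk$-basis of $R$, so the standard monomials of degree $n$ form a $\kk$-basis of $R_n$. Hence $\dim_\kk R_n$ equals the number of multichains $\gamma_1\le\cdots\le\gamma_n$ in $P$.

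The same count holds for the discrete ASL $\kk[\Delta(P)]$, which the paper notes is a homogeneous ASL on $P$ with $\deg x_\alpha=1$. Alternatively, this follows from Stanley--Reisner theory: the monomials of $S_P$ not in $J_P$ are exactly those supported on chains, i.e.\ the monomials $x_{\gamma_1}\cdots x_{\gamma_n}$ with $\gamma_1\le\cdots\le\gamma_n$. So $R$ and $\kk[\Delta(P)]$ have the same Hilbert function and therefore the same Hilbert series.

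Alternatively, one can use the Gr\"obner degeneration stated just above. Since $\operatorname{in}(I_R)=J_P$, and all variables have degree $1$ because $R$ is homogeneous, Macaulay's theorem gives that $S_P/I_R$ and $S_P/\operatorname{in}(I_R)$ have equal Hilbert functions.

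It remains to pass from equal Hilbert series to equal $h$-vectors. The Hilbert series determines $\dim R$ as the order of the pole at $t=1$. Once the series is written in reduced form $Q(t)/(1-t)^{d}$ with $Q(1)\neq0$, the numerator $Q$ is also determined, and $h$ is its coefficient vector. So equal Hilbert series force $\dim R=\dim\kk[\Delta(P)]$ and $h(R)=h(\kk[\Delta(P)])$.

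The only point needing care is that the paper's definition of the $h$-vector is in this reduced form, with exponent $\dim R$ and $h_s\neq0$, so no trailing-zero or normalization ambiguity arises. Checking that this form is well defined is the only mild obstacle; it is standard (Hilbert--Serre).
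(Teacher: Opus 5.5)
Your proof is correct, but your main argument reaches equal Hilbert functions by a different and more elementary route than the paper. The paper deduces the proposition from the Gr\"obner degeneration $\ini(I_R)=J_P$, which is quoted just before the statement; this is exactly your ``Alternatively'' paragraph. That degeneration needs (\ref{A2}) and a reverse lexicographic order refining the order of $P$.

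Your direct count uses only (\ref{A1}) together with $\deg\varphi(\alpha)=1$. The standard monomials of degree $n$ form a basis of $R_n$, and they are indexed by the same multichains as the monomials of $S_P$ outside $J_P$. So no Gr\"obner theory is needed. In fact, the usual proof of $\ini(I_R)=J_P$ itself rests on this linear independence of standard monomials, so your count sits upstream of the paper's argument.

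What the paper's route buys is economy across results. The same degeneration feeds Theorems~\ref{thm:universal-asl} and~\ref{thm:universal}, and the latter goes through Kalkbrener--Sturmfels, which genuinely needs an initial-ideal statement. For the $h$-vector alone, your count is the cleaner proof. Your last step is exactly what is needed: the Hilbert series fixes the Krull dimension as the pole order at $t=1$, and hence fixes the numerator.

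One small point applies equally to the paper. The hypothesis you actually use is $\varphi(P)\subseteq R_1$, and this does not follow from $R=\kk[R_1]$ alone. Take $P=\{\beta<\alpha,\ \beta<\alpha'\}$ with $\alpha,\alpha'$ incomparable, and send $\alpha\mapsto x$, $\alpha'\mapsto y$, $\beta\mapsto xy$. This makes $\kk[x,y]$ an ASL on $P$ with $h=(1)$, while $h(\kk[\Delta(P)])=(1,1)$. So your reading of ``homogeneous ASL'' is necessary, and it is consistent with how the paper defines integrality.
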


\begin{theorem}[{\cite[Corollary~3.9]{CV20}}]\label{thm:universal-asl}
Let $P$ be a poset and let $R=\bigoplus_{n\ge 0}R_n$ be an ASL on $P$ over a field $R_0=\kk$.
Then $R$ is Cohen--Macaulay if and only if $P$ is Cohen--Macaulay.
\end{theorem}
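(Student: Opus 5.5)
The plan is to derive the equivalence from the Gröbner degeneration set up just before the statement. With respect to the (weighted) degree reverse lexicographic order on $S_P$, coming from a linear extension of $P$, we have $R\cong S_P/I_R$ and $\operatorname{in}(I_R)=J_P$. So the claim becomes: $S_P/I_R$ is Cohen--Macaulay if and only if $S_P/\operatorname{in}(I_R)=\kk[\Delta(P)]$ is Cohen--Macaulay. I will treat the two implications separately, since they have quite different depth.

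First, the common input. Choose an integral weight vector $w$ with $\operatorname{in}_w(I_R)=\operatorname{in}(I_R)$, which exists for a finite Gröbner basis. This gives the Rees-type family $\widetilde I\subset S_P[t]$, where $S_P[t]/\widetilde I$ is flat over $\kk[t]$. Its fibre at $t=1$ is $R$ and its fibre at $t=0$ is $\kk[\Delta(P)]$. All objects are graded by the positive grading $\deg x_\alpha=\deg\varphi(\alpha)$, and $t$ can be made homogeneous of degree $0$ after twisting $w$ by a multiple of the degree vector. In particular $\dim R=\dim\kk[\Delta(P)]$, because the Hilbert functions agree: both have the standard monomials as a $\kk$-basis by (\ref{A1}).

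\emph{($P$ CM $\Rightarrow$ $R$ CM).} This is the classical upper semicontinuity of depth in a flat graded family, i.e.\ $\operatorname{depth} S/I\ge\operatorname{depth} S/\operatorname{in}(I)$. One argument: a homogeneous system of parameters of $\kk[\Delta(P)]$ that is a regular sequence lifts, by flatness and the graded Nakayama lemma, to a regular sequence on $S_P[t]/\widetilde I$ together with $t-1$. Alternatively, compare graded Betti numbers: $\beta_{ij}(S/I)\le\beta_{ij}(S/\operatorname{in}I)$, and then apply Auslander--Buchsbaum, so that $\operatorname{pd} R\le\operatorname{pd}\kk[\Delta(P)]$. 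Both routes only need positivity of the grading, not standardness.

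\emph{($R$ CM $\Rightarrow$ $P$ CM).} This is the real content, since depth can drop under Gröbner degeneration. Here I would use that $J_P$ is squarefree. Then the Conca--Varbaro theorem applies: for $\operatorname{in}(I)$ squarefree, $\dim_\kk H^i_{\mathfrak m}(S/I)_j=\dim_\kk H^i_{\mathfrak m}(S/\operatorname{in}(I))_j$ for all $i,j$. In particular $\operatorname{depth}R=\operatorname{depth}\kk[\Delta(P)]$, and together with the equality of dimensions this gives the equivalence in both directions at once.

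The main obstacle is that the Conca--Varbaro result is stated for standard graded polynomial rings, whereas here $\deg x_\alpha$ may be arbitrary positive integers. My first attempt is to observe that their proof only uses the flat family above, together with the fact that for a squarefree monomial ideal the Frobenius or "$t\mapsto t^{p}$" trick makes local cohomology of the special fibre the maximum possible. The latter is a statement about the $\mathbb Z^{|P|}$-graded structure of $\kk[\Delta(P)]$, independent of the coarse grading. Hence it transfers verbatim to any positive grading.

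If this is problematic, I would use a fallback based on the fact that depth and Cohen--Macaulayness do not depend on the chosen positive grading. Both are local properties at the irrelevant ideal $\mathfrak m=(x_\alpha)$, which is the same for any positive grading. Thus one may regrade $S_P$ by a total weight making $I_R$ homogeneous, or pass to the Veronese-free comparison via $\operatorname{Ext}$ modules. This is exactly the approach indicated by the references \cite[Section~3.1]{CV20} and \cite[Section~5]{C07}.
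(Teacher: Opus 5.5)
Your proposal is correct and follows the same route as the paper. The paper just invokes \cite[Corollary~3.9]{CV20}, which rests on exactly your ingredients: the squarefree Gr\"obner degeneration $\operatorname{in}(I_R)=J_P$, plus the Conca--Varbaro equality of Hilbert functions of local cohomology, which gives equal depth and dimension. Your separate semicontinuity argument for the easy direction is harmless but redundant, and the grading worry is unnecessary, since the cited result already covers ASLs whose generators have arbitrary positive degrees; your fallback paragraph should be dropped, because grading-independence of Cohen--Macaulayness does not by itself let you apply a standard-graded theorem to an ideal that is only homogeneous for the weighted grading.
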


\begin{theorem}[{\cite[Proposition~11.7]{B95} and \cite[Theorem~1]{KS95}}]\label{thm:universal}
Let $P$ be a poset.
If either of the following two conditions are satisfied:
\begin{enumerate}[label=(\arabic*),font=\normalfont,leftmargin=*]
    \item[(1)] $P$ is Cohen--Macaulay over some field,
    \item[(2)] $P$ is integral,
\end{enumerate}
then $\Delta(P)$ is pure and strongly connected, equivalently, $G(P)$ is connected.
\end{theorem}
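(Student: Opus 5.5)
The statement splits into two independent implications, one for each hypothesis. Both conclusions, purity of $\Delta(P)$ and strong connectivity, are known results. We only need to explain how each reduces to the cited theorems, together with the equivalence with connectivity of $G(P)$.

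\emph{Case (1).} Suppose $\kk[\Delta(P)]$ is Cohen--Macaulay for some field $\kk$. We use two standard Stanley--Reisner facts.
\begin{enumerate}[label=(\alph*),font=\normalfont,leftmargin=*]
\item A Cohen--Macaulay Stanley--Reisner ring is unmixed, so all minimal primes $P_F=(x_\alpha:\alpha\notin F)$, one for each facet $F$, have the same height. Hence $\Delta(P)$ is pure.
\item For each face $\sigma$ with $\dim\operatorname{lk}\sigma\ge1$, Reisner's criterion gives $\tilde H_0(\operatorname{lk}\sigma;\kk)=0$, so every such link is connected.
\end{enumerate}
A pure complex whose links of dimension at least one are all connected is strongly connected. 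We prove this by induction on dimension. Given facets $F,F'$, connectivity of $\Delta(P)$ itself (the link of the empty face) gives a path of facets $F=F_0,\dots,F_m=F'$ in which consecutive facets share a vertex $v_i$. Inside $\operatorname{lk} v_i$, which is again pure and satisfies the same link condition, induction connects $F_{i}\setminus v_i$ and $F_{i+1}\setminus v_i$ by codimension-one steps. Adding $v_i$ back gives codimension-one steps in $\Delta(P)$. This is \cite[Proposition~11.7]{B95}.

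\emph{Case (2).} Suppose $P$ is integral, and let $R$ be an ASL domain on $P$ with $\varphi(P)\subset R_1$. By the Gröbner degeneration $\ini(I_R)=J_P$ recalled above, $J_P$ is the initial ideal of a prime ideal. By the Kalkbrener--Sturmfels theorem \cite[Theorem~1]{KS95}, the radical of the initial ideal of a homogeneous prime is pure of dimension $\dim S_P/I_R$. Moreover, the associated simplicial complex is strongly connected, which is proved via a connectedness argument in codimension one (Hartshorne-type). Since $J_P$ is already radical, $\Delta(P)$ is pure and strongly connected.

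\emph{Equivalence.} Once $\Delta(P)$ is pure, the maximal-chain exchange graph $G(P)$ is exactly the facet-adjacency graph, as noted above, so strong connectivity is the same as connectivity of $G(P)$. Conversely, suppose $G(P)$ is connected. If two adjacent maximal chains $C,C'$ satisfy $|C\setminus C'|=|C'\setminus C|=1$, then $|C|=|C'|$. Connectivity therefore forces all maximal chains to have equal size, so $\Delta(P)$ is pure.

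The main obstacle is Case (2). There the real content is the Kalkbrener--Sturmfels connectedness theorem, which depends on the grading condition $\varphi(P)\subset R_1$ and on the reverse-lexicographic degeneration. I would cite it rather than reprove it.
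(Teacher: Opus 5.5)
Your proposal is correct and matches the paper, which proves this statement purely by citation: Björner's Proposition~11.7 for the Cohen--Macaulay case, and Kalkbrener--Sturmfels applied to the degeneration $\ini(I_R)=J_P$ for the integral case. Your unmixedness and Reisner-criterion argument with induction on links just unpacks the first citation, and your check that purity plus strong connectivity is equivalent to connectivity of $G(P)$ is also fine.
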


% \begin{proof}
% Write $A=S_P/\mathfrak p$. The ideal $\mathfrak p$ is prime, and \eqref{eq:asl-degeneration} gives $\ini_{\prec}(\mathfrak p)=J_P$. Since $J_P$ is squarefree, the initial complex is exactly $\Delta(P)$. Kalkbrener and Sturmfels~\cite[Theorem~1]{KS95} prove that the initial complex of a prime ideal is pure and connected through codimension-one faces. Hence $G(P)$ is connected.
% \end{proof}

\bigskip

\section{Semidistributivity and maximal-chain exchange graphs}\label{sec:lattice}
%In this section, we study maximal-chain exchange graphs of join- and meet-semidistributive lattices.
In this section, we study maximal-chain exchange graphs for join-semidistributive lattices and for meet-distributive lattices.
Our main goal is to characterize meet- and join-distributivity in terms of
the connectivity of these graphs.

\subsection{Lattice-theoretic preliminaries}
We first recall the lattice-theoretic notions used below.
For basic terminology and results on lattice theory, we refer the reader
to~\cite{AdarichevaNationSemidistributive,EdelmanMeetDistributive,GraetzerLatticeTheory}.

Let $L$ be a lattice with the minimum element $\widehat0$ and the maximum element $\widehat1$. The \emph{dual lattice} $L^d$ is obtained by reversing the order. For $x\le y$, the \emph{interval} $[x,y]$ is the subposet $\{z\in L:x\le z\le y\}$. If $x<y$ and no element lies strictly between them, we write $x\lessdotrel y$ and call this a \emph{cover relation}. A chain $x_0<\cdots<x_m$ has \emph{length} $m$ and is \emph{saturated} if $x_{i-1}\lessdotrel x_i$ for every $i$.
%The length $\ell(P)$ of a finite poset is the maximum length of a chain in $P$. A finite lattice is \emph{graded} if every interval has maximal chains of one common length; for a graded lattice, $\operatorname{rank}L$ denotes the common length of its maximal chains. The \emph{proper part} of $L$ is $\overline L=L\setminus\{\widehat0,\widehat1\}$.
An element $j\ne\widehat0$ is \emph{join-irreducible} if $j=a\vee b$ implies $j=a$ or $j=b$. In a finite lattice this is equivalent to $j$ having a unique lower cover, denoted $j_*$. Let $\JJ(L)$ be the set of join-irreducibles, and let $\MM(L)$ be the set of \emph{meet-irreducibles}, defined dually.

We recall the lattice classes that will be used throughout the remainder of the paper.
\begin{itemize}
    \item The lattice $L$ is \emph{distributive} if for all $a,b,c\in L$, one has
\[
a\wedge(b\vee c)=(a\wedge b)\vee(a\wedge c) \quad \text{ and }\quad a\vee(b\wedge c)=(a\vee b)\wedge(a\vee c).
\]

   \item The lattice $L$ is \emph{join-semidistributive} (\emph{JSD}, for short) if for all $a,b,c\in L$,
\[
a\vee b=a\vee c
\quad\Longrightarrow\quad
a\vee(b\wedge c)=a\vee b,
\]
and \emph{meet-semidistributive} (\emph{MSD}, for short) if the dual condition holds.
It is \emph{semidistributive} (\emph{SD}, for short) if it is JSD and MSD.

   \item The lattice $L$ is \emph{meet-distributive} (\emph{MD}, for short) if $[x_\downarrow,x]$ is a Boolean lattice for every $x\in L\setminus\{\hat{0}\}$, where $x_\downarrow:=\bigwedge\{y\in L:y\lessdot x\}$, and \emph{join-distributive} (\emph{JD}, for short) if the dual condition holds.
   
   \item The lattice $L$ is \emph{lower semimodular} if, for all $x,y\in L$,
\[
x\lessdotrel x\vee y
\quad\Longrightarrow\quad
x\wedge y\lessdotrel y.
\]
and is \emph{upper semimodular} if $L^d$ is lower semimodular.
% \[
% x\wedge y\lessdotrel x
% \quad\Longrightarrow\quad
% y\lessdotrel x\vee y
% \qquad(x,y\in L).
% \]
\end{itemize}

\begin{proposition}[{\cite[Proposition~2.1]{Cz14}}]\label{prop:cz}
   Let $L$ be a lattice. Then the following conditions are equivalent:
\begin{enumerate}[label=(\arabic*),font=\normalfont,leftmargin=*]
       \item[(i)] $L$ is MD (resp. JD);
       \item[(ii)] $L$ is JSD (resp. MSD) and lower semimodular (resp. upper semimodular);
       \item[(iii)] every maximal chain has the length $|\JJ(L)|$ (resp. $|\MM(L)|$).
   \end{enumerate}
\end{proposition}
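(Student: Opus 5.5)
We would prove the cycle (iii)$\Rightarrow$(i)$\Rightarrow$(ii)$\Rightarrow$(iii) for MD, and obtain the JD version by applying the result to the dual lattice $L^d$. The basic tool is the injective meet-preserving map $x\mapsto \JJ_\le(x):=\{j\in\JJ(L): j\le x\}$, which represents $L$ as a closure system on $\JJ(L)$ in which meets are intersections.

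\textbf{A counting lemma.} Fix a maximal chain $\widehat0=x_0\lessdot x_1\lessdot\cdots\lessdot x_m=\widehat1$. Send each $j\in\JJ(L)$ to the first index $i$ with $j\le x_i$. This map is well defined since $j\le\widehat1$. It is surjective, because $x_{i-1}<x_i$ and $x_i$ is the join of the join-irreducibles below it, so some $j\le x_i$ satisfies $j\not\le x_{i-1}$. Hence $m\le|\JJ(L)|$ always. Moreover, $m=|\JJ(L)|$ exactly when each step of the chain satisfies $|\JJ_\le(x_i)\setminus\JJ_\le(x_{i-1})|=1$. Every cover relation lies on some maximal chain, so (iii) is equivalent to
\[
(\ast)\qquad |\JJ_\le(y)\setminus\JJ_\le(x)|=1\ \text{ for every cover } x\lessdot y .
\]

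\textbf{(iii)$\Rightarrow$(i).} Let $x_1,\dots,x_k$ be the lower covers of $y$. By $(\ast)$ we have $\JJ_\le(x_i)=\JJ_\le(y)\setminus\{j_i\}$. The $j_i$ are pairwise distinct, since the $x_i$ are distinct. Because meets correspond to intersections, for every $S\subseteq\{1,\dots,k\}$ we get $\JJ_\le\bigl(\bigwedge_{i\in S}x_i\bigr)=\JJ_\le(y)\setminus\{j_i:i\in S\}$. In particular, $\JJ_\le(y_\downarrow)=\JJ_\le(y)\setminus\{j_1,\dots,j_k\}$. Now take any $z\in[y_\downarrow,y]$. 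Its set $\JJ_\le(z)$ lies between these two sets, so it equals $\JJ_\le(y)\setminus T$ for some $T\subseteq\{j_i\}$. Therefore $z=\bigwedge_{j_i\in T}x_i$, and $[y_\downarrow,y]$ is isomorphic to the Boolean lattice $2^{k}$.

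\textbf{(ii)$\Rightarrow$(iii).} We will use the standard fact that $L$ is JSD if and only if, for every cover $x\lessdot y$, the set $\JJ_\le(y)\setminus\JJ_\le(x)$ has a least element $j(x,y)$. Suppose some cover $x\lessdot y$ had a second element $j'\neq j(x,y)$ in this set. Then $j'>j(x,y)$, and we would look at $x':=x\wedge j'_*$. The plan is to apply lower semimodularity to the pair $x$ and $j'$, or to $x$ and $j'_*$, to produce a cover inside $[x\wedge j',j']$. This should contradict $j(x,y)<j'$ together with $j'$ having a unique lower cover, and the argument would run by induction on the height of $y$. If this direct route fails, we would instead prove that all maximal chains in a JSD, lower semimodular lattice have equal length; lower semimodularity gives the Jordan--Dedekind chain condition. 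We would then exhibit one specific maximal chain of length $|\JJ(L)|$, built greedily by adding join-irreducibles in a linear extension of $\JJ(L)$, using the least-element property at each step.

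\textbf{(i)$\Rightarrow$(ii).} For JSD, suppose $a\vee b=a\vee c=y$ but $a\vee(b\wedge c)<y$. Choose a lower cover $x$ of $y$ with $x\ge a\vee(b\wedge c)$. Then $b\not\le x$ and $c\not\le x$. Using the Boolean interval $[y_\downarrow,y]$, we would show that $b$ and $c$ are both forced above the same coatom-complement, which contradicts $b\wedge c\le x$. For lower semimodularity, let $x\lessdot x\vee y$. Every lower cover of $x\vee y$ lies in the Boolean interval, and in a Boolean interval cover relations are preserved under meets. We then push this down by induction on the length of $[x\wedge y,y]$.

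We expect (i)$\Rightarrow$(ii) and (ii)$\Rightarrow$(iii) to be the main obstacles. The first step we would try is to prove that MD implies $(\ast)$ directly, by induction on $y$ via its Boolean interval, so that these two implications can be routed through $(\ast)$.
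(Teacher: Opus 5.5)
\textbf{Overall.} The paper does not prove this proposition; it quotes it from Cz\'edli. So the only question is whether your argument stands on its own. Your counting lemma, the reduction of (iii) to $(\ast)$, and (iii)$\Rightarrow$(i) are correct and complete. There is one genuine gap, in (i)$\Rightarrow$(ii).

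\textbf{(ii)$\Rightarrow$(iii).} Your first route works in one line and needs no induction. Since $x<x\vee j'\le y$ and $x\lessdot y$, you have $x\lessdot x\vee j'$. Lower semimodularity then gives $x\wedge j'\lessdot j'$, so $x\wedge j'=j'_*$. As $j(x,y)<j'$, you get $j(x,y)\le j'_*\le x$, contradicting $j(x,y)\not\le x$. Commit to this and drop the fallback: the greedy chain along a linear extension of $\JJ(L)$ is not justified as stated.

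\textbf{The gap: (i)$\Rightarrow$(ii), specifically join-semidistributivity.} You claim that $b$ and $c$ are forced above the complement of $x$ in $[y_\downarrow,y]$. This is false, because $b$ need not lie above $y_\downarrow$. Take the lattice of convex subsets of the chain $1<2<3$, which is MD. Let $y=\{1,2,3\}$ and $x=\{1,2\}$. The complement of $x$ in $[y_\downarrow,y]=[\{2\},y]$ is $\{2,3\}$, yet $b=\{3\}$ satisfies $b\le y$ and $b\not\le x$ without lying above $\{2,3\}$.

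What $b$ and $c$ are really forced above is the unique join-irreducible in $\JJ_\le(y)\setminus\JJ_\le(x)$. So you need $(\ast)$ first, and the step MD$\Rightarrow(\ast)$ is exactly the one you only announce. It goes by induction on $y$:
\begin{itemize}
\item If $y$ has a single lower cover, then $y\in\JJ(L)$ and the difference is $\{y\}$.
\item Otherwise let $x_1,\dots,x_k$ ($k\ge 2$) be the lower covers. Since $y\notin\JJ(L)$, every $j\in\JJ_\le(y)$ lies below some $x_l$, so $\JJ_\le(y)\setminus\JJ_\le(x_1)=\bigcup_{l\ge 2}\bigl(\JJ_\le(x_l)\setminus\JJ_\le(x_1)\bigr)$.
\item For nonempty $S\subseteq\{2,\dots,k\}$ put $x_S=\bigwedge_{l\in S}x_l$. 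Booleanness of $[y_\downarrow,y]$ gives $x_S\wedge x_1\lessdot x_S$. By induction, $\JJ_\le(x_S)\setminus\JJ_\le(x_1)$ is a singleton $\{j_S\}$.
\item If $S\subseteq S'$, then $x_{S'}\le x_S$, so $j_{S'}=j_S$. Hence every $j_{\{l\}}$ equals $j_{\{2,\dots,k\}}$, and $(\ast)$ holds at $y$.
\end{itemize}

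\textbf{From $(\ast)$ to (ii).} Both parts are then immediate.
\begin{itemize}
\item JSD: let $j$ be the label of $x\lessdot y$. Every $z\le y$ with $z\not\le x$ satisfies $j\le z$. In your JSD setup this gives $j\le b\wedge c\le x$, a contradiction.
\item Lower semimodularity: for $x\lessdot x\vee y$, the set $\JJ_\le(y)\setminus\JJ_\le(x\wedge y)$ is contained in $\JJ_\le(x\vee y)\setminus\JJ_\le(x)$. It is nonempty because $x\wedge y<y$, so it is a singleton. An element strictly between $x\wedge y$ and $y$ would contribute a second join-irreducible, so $x\wedge y\lessdot y$.
\end{itemize}
This also replaces your vague ``push down'' argument for lower semimodularity. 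The resulting cycle is (i)$\Rightarrow$(iii)$\Rightarrow$(ii)$\Rightarrow$(iii)$\Rightarrow$(i).
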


\subsection{A characterization of meet-distributivity}

Throughout this subsection, we assume that $L$ is JSD.
We first recall the standard edge labeling of a JSD lattice.
For a cover relation $x\lessdotrel y$, there exists the unique minimal element of $\{z\in L:x\vee z=y\}$ since $L$ is JSD.
We denote this element by $\lambda(x,y)$.
It is well known that $\lambda(x,y)\in\JJ(L)$; see, for example,
\cite[Lemma~3.1]{MuehleMeetDistributive}.
Moreover, $\lambda(j_*,j)=j$ for every $j\in\JJ(L)$.
We refer to $\lambda$ as the \emph{canonical join-irreducible labeling} of $L$.

\begin{lemma}\label{lem:diamond-label}
For $u,p,q,v\in L$ with $u\lessdotrel p,q\lessdotrel v$ and $p\neq q$, we have $\lambda(u,p)=\lambda(q,v)$ and $\lambda(u,q)=\lambda(p,v)$.
\end{lemma}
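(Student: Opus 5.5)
The plan is a direct argument from the minimality defining $\lambda$. It rests on one standard consequence of JSD: for a cover $x\lessdotrel y$, the set $\{z\in L: x\vee z=y\}$ is closed under meets. Indeed, if $x\vee a=x\vee b=y$, then JSD gives $x\vee(a\wedge b)=y$. So this set has a unique minimum $\lambda(x,y)$, and for any $z$ with $x\vee z=y$ we have $\lambda(x,y)\le z$.

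Set $i:=\lambda(u,p)$ and $j:=\lambda(q,v)$. Since $p\ne q$ both cover $u$ and both are covered by $v$, we get $p\vee q=v$: this join lies strictly above $p$ and below $v$, so it equals $v$.

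First I show $j\le i$. Since $u\le q$, we have $q\vee i\ge u\vee i=p$. Hence $q\vee i\ge p\vee q=v$. Also $i\le p\le v$ and $q\le v$, so $q\vee i\le v$. Thus $q\vee i=v$, and the minimality of $j$ gives $j\le i$.

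Next I show $i\le j$, by proving $u\vee j=p$.
\begin{itemize}
\item Since $j\le i\le p$ and $u\le p$, we have $u\vee j\le p$.
\item Suppose $u\vee j=u$. Then $j\le u\le q$, so $q\vee j=q\neq v$, contradicting the definition of $j$.
\end{itemize}
So $u<u\vee j\le p$, and since $u\lessdotrel p$, this forces $u\vee j=p$. The minimality of $i$ then gives $i\le j$. Hence $\lambda(u,p)=\lambda(q,v)$.

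The second identity $\lambda(u,q)=\lambda(p,v)$ follows by the same argument with the roles of $p$ and $q$ exchanged. The only genuinely needed input is the meet-closedness from JSD, which makes $\lambda$ a minimum rather than merely a minimal element. I expect no real obstacle beyond noting this.
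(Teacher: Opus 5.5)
Your proof is correct and is essentially the paper's argument: you show $q\vee\lambda(u,p)=v$ to get $\lambda(q,v)\le\lambda(u,p)$, then use the cover $u\lessdotrel p$ and the impossibility of $\lambda(q,v)\le u$ to get $u\vee\lambda(q,v)=p$ and hence the reverse inequality. You also spell out two points the paper leaves implicit: that $\lambda$ is a minimum because the set is meet-closed under JSD, and that $p\vee q=v$.
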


\begin{proof}
Set \(j=\lambda(u,p)\) and \(j'=\lambda(q,v)\).
Since \(u\vee j=p\), we have \(q\vee j=q\vee p=v\), and hence \(j'\leq j\). Thus \(u\leq u\vee j'\leq p\). If \(u\vee j'=u\), then \(j'\leq u\leq q\),
contradicting \(q\vee j'=v\). Hence \(u\vee j'=p\), so \(j\leq j'\).
Therefore \(j=j'\). The other equality follows symmetrically.
\end{proof}

For a maximal chain $C$, we define the \emph{join-label set} of $C$
\[
\Lambda(C):=\{\lambda(x,y):x\lessdotrel y\text{ is an edge of }C\}
\subseteq\JJ(L).
\]

\begin{lemma}\label{lem:no-repeat}
The following statements hold:
\begin{enumerate}[label=(\arabic*),font=\normalfont,leftmargin=*]
\item[(1)] Let $x_0\lessdotrel x_1\lessdotrel\cdots\lessdotrel x_r$ be a chain in $L$.  Then the labels $\lambda(x_{i-1},x_i)$ ($i=1,\ldots,r$) are pairwise distinct.
\item[(2)] The set $\Lambda(C)$ is constant on each connected component of $G(L)$.
\end{enumerate}
\end{lemma}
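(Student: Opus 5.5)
For (1), the plan is to show that once a label appears on a saturated chain, the chain's top lies above that label, while no earlier element does. Set $j_i:=\lambda(x_{i-1},x_i)$. Since $x_{i-1}\vee j_i=x_i$ and $x_{i-1}\lessdotrel x_i$, we have $j_i\le x_i$ and $j_i\not\le x_{i-1}$; otherwise $x_{i-1}\vee j_i=x_{i-1}$.

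Now suppose $i<k$ and $j_i=j_k$. Then $j_k=j_i\le x_i\le x_{k-1}$, because the chain is increasing and $i\le k-1$. This contradicts $j_k\not\le x_{k-1}$, so the labels are pairwise distinct. Only the definition of $\lambda$ as a minimal element of $\{z:x\vee z=y\}$ is used, together with the fact that $z\le x$ forces $x\vee z=x\neq y$.

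For (2), it suffices to treat two adjacent maximal chains $C$ and $C'$ in $G(L)$. Such chains satisfy $C\setminus C'=\{p\}$ and $C'\setminus C=\{q\}$ with $p\neq q$.

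In $C$, let $u$ be the largest element below $p$ and $v$ the smallest element above $p$. These exist: $p$ is neither $\widehat0$ nor $\widehat1$, since every maximal chain of a lattice contains both. We have $u\lessdotrel p\lessdotrel v$ by maximality of $C$. The set $C\setminus\{p\}=C'\setminus\{q\}$ is a chain, and $C'$ is obtained from it by inserting $q$. Every element of $C\setminus\{p\}$ is either $\le u$ or $\ge v$. Since $C'$ is a chain, $q$ is comparable to $u$ and to $v$.

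The main point is to place $q$ strictly between $u$ and $v$. By maximality of $C$, $q$ cannot be inserted into $C$, so $q$ must be incomparable to $p$. If $q\le u$, then $q$ could be inserted into $C$ (at a suitable position below $u$, as $C'$ is a chain). This contradicts maximality of $C$ unless $q\in C$, which is excluded. Similarly $q\ge v$ is impossible. Hence $u<q<v$.

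By maximality of $C'$, we also get $u\lessdotrel q\lessdotrel v$. Otherwise some $w$ strictly between $u$ and $q$ would extend $C'$: such a $w$ is comparable with everything in $C'\setminus\{q\}$, since those elements are $\le u$ or $\ge v$.

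Then Lemma~\ref{lem:diamond-label} gives $\lambda(u,p)=\lambda(q,v)$ and $\lambda(u,q)=\lambda(p,v)$. All other edges of $C$ and $C'$ coincide. Hence $\Lambda(C)=\Lambda(C')$, and by induction along paths, $\Lambda$ is constant on each connected component of $G(L)$.

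The only mildly delicate step is the bookkeeping showing that $q$ fits into the diamond $u\lessdotrel p,q\lessdotrel v$, which rests on maximality of both chains.
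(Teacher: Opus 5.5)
Your proposal is correct and follows essentially the same route as the paper. Part (1) is the same argument: $j\le x_i\le x_{k-1}$, but $j\not\le x_{k-1}$. Part (2) reduces, as in the paper, to the diamond $u\lessdotrel p,q\lessdotrel v$ and then applies Lemma~\ref{lem:diamond-label}. The differences are cosmetic: you rule out $q\le u$ and $q\ge v$ by inserting $q$ into $C$ rather than $p$ into $C'$, and you explicitly verify the covers $u\lessdotrel q\lessdotrel v$, which the paper leaves implicit.
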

\begin{proof}
(1) Suppose that $j=\lambda(x_{i-1},x_i)$.
Then $j\leq x_i$. Hence, for every $k>i$, we have $j\leq x_{k-1}$.
On the other hand, if $j=\lambda(x_{k-1},x_k)$, then, by the definition of the labeling, $j\nleq x_{k-1}$, a contradiction.

\medskip

(2) We first give the following elementary observation; if maximal chains $C,C'$ are adjacent in $G(L)$, with $C\setminus C'=\{p\}$ and $C'\setminus C=\{q\}$, then there are $u,v\in C\cap C'$ such that \(u\lessdotrel p,q\lessdotrel v\).
Indeed, let $u$ and $v$ be the elements immediately below and above $p$ in $C$.
If $q$ were not strictly between $u$ and $v$, then $p$ could be inserted into $C'$, contradicting maximality.
Therefore, it follows from Lemma~\ref{lem:diamond-label} that $\Lambda(C)=\Lambda(C')$.
\end{proof}

\begin{theorem}\label{thm:char_MD}
Let $L$ be a JSD (resp. MSD) lattice.
Then $L$ is MD (resp. JD) if and only if $G(L)$ is connected.
\end{theorem}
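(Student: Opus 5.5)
By duality (passing to $L^d$, which swaps JSD/MSD and MD/JD and preserves $G(L)$), it suffices to treat a JSD lattice $L$.

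\emph{MD $\Rightarrow$ connected.} By Proposition~\ref{prop:cz}, $L$ is lower semimodular. A finite lower semimodular lattice is graded. We show by induction on the length of $[\widehat0,y]$ that any two maximal chains of an interval $[\widehat0,y]$ are connected in the exchange graph of that interval. Take two maximal chains of $[\widehat0,y]$ passing through distinct lower covers $p,q$ of $y$. Set $u=p\wedge q$. Lower semimodularity, applied with $p\lessdotrel p\vee q=y$, gives $u\lessdotrel q$, and symmetrically $u\lessdotrel p$. By induction, the first chain can be moved inside $[\widehat0,p]$ to one passing through $u$. The second chain can likewise be moved inside $[\widehat0,q]$ to one passing through $u$, and we may choose both to share the same segment below $u$. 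These two chains then differ only in $p$ versus $q$, so they are adjacent. The same induction applied to intervals $[x,\widehat1]$ is not needed: applying it once to the whole lattice $[\widehat0,\widehat1]$ already shows $G(L)$ is connected.

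\emph{Connected $\Rightarrow$ MD.} Use Proposition~\ref{prop:cz}(iii): it suffices to show every maximal chain $C$ has length $|\JJ(L)|$. By Lemma~\ref{lem:no-repeat}(1), the labels along $C$ are distinct, so $|C|-1=|\Lambda(C)|$. By Lemma~\ref{lem:no-repeat}(2) and connectivity, $\Lambda(C)$ is the same set $\Lambda$ for all maximal chains. It therefore remains to show $\Lambda=\JJ(L)$. Given $j\in\JJ(L)$, pick a maximal chain through $j_*\lessdotrel j$. Its label on that edge is $\lambda(j_*,j)=j$, so $j\in\Lambda$. Hence every maximal chain has length $|\JJ(L)|$, and $L$ is MD.

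The main subtlety is the first direction: we need lower semimodularity to produce the common lower cover $u$, which is what makes the diamond exchange work. The converse direction is essentially immediate from the lemmas already established.
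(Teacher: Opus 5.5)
Your proof is correct. The direction ``$G(L)$ connected $\Rightarrow$ MD'' is the paper's argument: a common label set from Lemma~\ref{lem:no-repeat}, the edges $j_*\lessdotrel j$ giving $\JJ(L)\subseteq\Lambda$ (the reverse inclusion is built into the labeling), then Proposition~\ref{prop:cz}(iii).

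For ``MD $\Rightarrow$ connected'' you take a genuinely different route. The paper extracts lower semimodularity from Proposition~\ref{prop:cz}. It then cites Bj\"orner's theorem that upper semimodular lattices are shellable, applied to $L^d$ (using $\Delta(L^d)=\Delta(L)$), and gets strong connectivity as a by-product. You instead prove connectivity directly by induction on intervals $[\widehat0,y]$. Lower semimodularity supplies the diamond $u=p\wedge q\lessdotrel p,q\lessdotrel y$; this is the classical Jordan--Dedekind-type exchange argument.

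Your version is elementary and self-contained. Gradedness is not even needed if you induct on $|[\widehat0,y]|$. You should state explicitly that two chains through the same lower cover $p$ of $y$ are handled directly by the induction hypothesis on $[\widehat0,p]$.

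What the paper's citation buys is the stronger fact that $\Delta(L)$ is shellable. The paper reuses this immediately in the proof of Theorem~\ref{thm:main1}(1) to get Cohen--Macaulayness over every field. With your argument alone, that later step would still need a separate shellability (or Cohen--Macaulayness) input.
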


\begin{proof}
Assume that $L$ is JSD.

If $G(L)$ is connected, then all maximal chains have a common join-label set and have the same length from Lemma~\ref{lem:no-repeat}.
Every $j\in\JJ(L)$ labels the edge $j_*\lessdotrel j$.
Extending this edge to a maximal chain $C$ shows $j\in\Lambda(C)$.
Conversely, every label is join-irreducible.
Therefore, we have $\JJ(L)=\Lambda(C)$ for every maximal chain $C$.
From Proposition~\ref{prop:cz}, $L$ is MD.

Conversely, suppose that $L$ is MD.
Then $L$ is JSD and $L^d$ is upper semimodular by Proposition~\ref{prop:cz}.
This shows that $\Delta(L)$ is shellable (\cite[Theorem~6.1]{B80}), and hence $G(L)$ is connected.

The statement in the MSD case follows by duality.
\end{proof}

\begin{corollary}\label{cor:semidist-chain}
Let $L$ be an SD lattice.
Then \(G(L)\) is connected if and only if \(L\) is distributive.
\end{corollary}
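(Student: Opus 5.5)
The plan is to deduce the corollary directly from Theorem~\ref{thm:char_MD} applied twice, once in each direction of duality. The only extra ingredient is that a lattice which is both MD and JD is distributive.

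First, suppose \(G(L)\) is connected. Since \(L\) is SD, it is in particular JSD, so Theorem~\ref{thm:char_MD} gives that \(L\) is MD. Since \(L\) is also MSD, the dual case of Theorem~\ref{thm:char_MD} gives that \(L\) is JD. We now have to see that MD together with JD forces distributivity. By Proposition~\ref{prop:cz}, MD implies lower semimodularity and JD implies upper semimodularity. Hence \(L\) is both lower and upper semimodular, and a finite lattice with both properties is modular (a standard fact, e.g.\ in \cite{GraetzerLatticeTheory}). It is also semidistributive, and a modular lattice contains no \(M_3\) sublattice because \(M_3\) is not semidistributive. Since modularity already excludes \(N_5\), Birkhoff's \(M_3\)--\(N_5\) characterization shows that \(L\) is distributive.

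Conversely, suppose \(L\) is distributive. Then \(L\) is MD, since for each \(x\neq\widehat0\) the interval \([x_\downarrow,x]\) in a distributive lattice is Boolean. As \(L\) is JSD, Theorem~\ref{thm:char_MD} then yields that \(G(L)\) is connected. Alternatively, one can cite the shellability of distributive lattices, which are upper semimodular.

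The only step that is not immediate is the passage from ``MD and JD'' to ``distributive''. This is exactly the remark already made in the introduction, so I expect it to be handled by citing standard lattice theory: finite lower plus upper semimodular implies modular, and modular plus no \(M_3\) implies distributive. Everything else is a direct application of Theorem~\ref{thm:char_MD} and Proposition~\ref{prop:cz}.
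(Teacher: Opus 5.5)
Your proof is correct and follows the paper's route. You apply Theorem~\ref{thm:char_MD} in both its JSD and MSD forms, and you justify that MD together with JD is equivalent to distributivity (as the introduction sketches) via Proposition~\ref{prop:cz}, the fact that a finite lattice which is both upper and lower semimodular is modular, and the $M_3$--$N_5$ theorem.

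One wording slip: it is not true that a modular lattice contains no $M_3$ sublattice, since $M_3$ is itself modular. What you need is that $L$ contains no $M_3$ sublattice, because semidistributivity passes to sublattices and $M_3$ is not semidistributive.
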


\begin{proof}
We can see that $L$ is $MD$ and $JD$ if and only if $L$ is distributive.
Thus, our assertion holds from Theorem~\ref{thm:char_MD}.
\end{proof}

\section{Proof of the main theorem}\label{sec:proof}
We now prove Theorem~\ref{thm:main1}.
By duality, it suffices to consider the join-semidistributive case, which we
assume throughout this section.
Let $L$ be a JSD lattice.

\medskip

%\begin{proof}[Proof of Theorem~\ref{thm:main1}]
We first show Theorem~\ref{thm:main1} (1).
Suppose that $L$ is MD.
As in the proof of Theorem~\ref{thm:char_MD}, $\Delta(L)$ is shellable.
Thus, $L$ is Cohen--Macaulay over an arbitrary field (\cite[Theorem~III.2.5]{StanleyCCA}).

Conversely, if $L$ is Cohen--Macaulay over an arbitrary field, then $G(L)$ is connected by Theorem~\ref{thm:universal}.
Thus, $L$ is MD from Theorem~\ref{thm:char_MD}.

% {\bf (i) $\Rightarrow$ (ii)}: As in the proof of Theorem~\ref{thm:char_MD}, $\Delta(L)$ is shellable discussed since $L$ is MD.
% Thus, $\kk[\Delta(L)]$ is Cohen--Macaulay over any field $\kk$.
% From Theorem~\ref{thm:universal-asl}, we get the assertion.

% {\bf (ii) $\Rightarrow$ (iii)}: Trivial.

% {\bf (iii) $\Rightarrow$ (i)}: Let $\kk$ be a field. Then $\kk[\Delta(L)]$ is an ASL on $L$, and hence satisfies $(S_2)$.
% 

\medskip

Next, we show Theorem~\ref{thm:main1} (2), but it immediately holds from Theorems~\ref{thm:universal} and \ref{thm:char_MD}.

\medskip

To prove parts (3) and (4) of Theorem~\ref{thm:main1}, we construct explicit examples satisfying the desired properties. 
Let $L_{13}$ and $L_7$ be lattices depicted in Figure~\ref{fig:small-lattices}.
\begin{figure}[H]
\centering
\begin{minipage}[c]{0.62\textwidth}
\centering
\begin{tikzpicture}[
  x=.95cm,y=.64cm,
  cover/.style={line width=.45pt},
  bridge/.style={preaction={draw=white,line width=2.2pt},line width=.45pt},
  vertex/.style={circle,draw,fill=white,minimum size=4.4mm,inner sep=0pt,font=\scriptsize}
]
\coordinate (x0) at (0,0);
\coordinate (x1) at (0,1.15);
\coordinate (x2) at (-3,1.15);
\coordinate (x3) at (3,1.15);
\coordinate (x4) at (-3,2.30);
\coordinate (x5) at (1,2.30);
\coordinate (x6) at (-1,2.30);
\coordinate (x7) at (3,2.30);
\coordinate (x8) at (-1,3.45);
\coordinate (x9) at (1,3.45);
\coordinate (x10) at (-1,4.60);
\coordinate (x11) at (1,4.60);
\coordinate (x12) at (0,5.75);
\draw[cover] (x0)--(x1) (x0)--(x2) (x0)--(x3) (x1)--(x5) (x2)--(x4) (x2)--(x6) (x3)--(x5) (x3)--(x7) (x4)--(x8) (x5)--(x8) (x5)--(x9) (x6)--(x8) (x7)--(x9) (x8)--(x10) (x8)--(x11) (x9)--(x11) (x10)--(x12) (x11)--(x12);
\draw[bridge] (x1)--(x4) (x3)--(x6);
\foreach \i in {0,...,12} \node[vertex] at (x\i) {$\i$};
\end{tikzpicture}

\par
\makebox[\linewidth][c]{%
  \raisebox{-1mm}[0pt][0pt]{\(L_{13}\)}%
}
\end{minipage}
\begin{minipage}[c]{0.35\textwidth}
\centering
\begin{tikzpicture}[
  x=.82cm,y=.72cm,
  cover/.style={line width=.45pt},
  vertex/.style={circle,draw,fill=white,minimum size=4.4mm,inner sep=0pt,font=\scriptsize}
]
\coordinate (y0) at (0,0);
\coordinate (y1) at (-1.35,1.15);
\coordinate (y2) at (1.35,1.15);
\coordinate (y3) at (0,1.15);
\coordinate (y4) at (-.72,2.30);
\coordinate (y5) at (.72,2.30);
\coordinate (y6) at (0,3.45);
\draw[cover] (y0)--(y1) (y0)--(y2) (y0)--(y3) (y1)--(y4) (y2)--(y5) (y3)--(y4) (y3)--(y5) (y4)--(y6) (y5)--(y6);
\foreach \i in {0,...,6} \node[vertex] at (y\i) {$\i$};
\end{tikzpicture}

\par
\makebox[\linewidth][c]{%
  \raisebox{-6mm}[0pt][0pt]{\(L_7\)}%
}
\end{minipage}
\caption{The Hasse diagrams of $L_{13}$ and $L_{7}$.}
\label{fig:small-lattices}
\end{figure}

For any $x\in L_{13}\setminus \{0\}$, we can see that $[x_\downarrow,x]$ is a Boolean lattice, hence $L_{13}$ is MD.
Moreover, using {\tt Macaulay2} (\cite{Macaulay2}), we can compute the $h$-vector
$h(\kk[\Delta(L_{13})])=(1,7,6,1)$.
If $L_{13}$ is integral, there exists a homogeneous ASL domain $R$
on $L_{13}$, which is Cohen--Macaulay by the above discussion.
It follows from \cite[Theorem~2.1]{S91} that the $h$-vector $h(R)=(h_0,\ldots,h_s)$ satisfies 
\begin{equation}\label{eq:stanley-partial-sums}
h_0+\cdots +h_i\leq h_s+\cdots +h_{s-i}\qquad\left(1\leq i\leq\left\lfloor\frac{s}{2}\right\rfloor\right).
\end{equation}
However, from Proposition~\ref{prop:h-vector}, we have $h(R)=h(\kk[\Delta(L_{13})])$ and $8=h_0+h_1\leq h_3+h_2=7$, a contradiction.
This shows Theorem~\ref{thm:main1} (3).

\medskip

It is easy to check that $L_7$ is MD, but not distributive.
Let $\kk$ be a field and let $U=\kk[a,b,c,d]$ be a polynomial ring in 4 variables over $\kk$.
Define the morphism of $\kk$-algebra $\phi : S_{L_7}\to U$ induced by
\[
(x_0,x_1,x_2,x_3,x_4,x_5,x_6)\longmapsto(ab,ab^2,a,abc,ab^2c,ac,d).
\]
{\tt Macaulay2} tells us
\[
I:=\mathrm{Ker}(\phi)=(x_1x_2-x_0^2,\ x_1x_3-x_0x_4,\ x_1x_5-x_0x_3,\ x_2x_3-x_0x_5,\ x_2x_4-x_0x_3,\ x_4x_5-x_3^2).
\]
It is easy to check that the straightening relations given by these six polynomials satisfy condition (\ref{A2}).
Moreover, for the reverse lexicographic order $\prec_{\mathrm{rev}}$ on $S_{L_7}$ induced by $x_0\prec_{\mathrm{rev}} x_1\prec_{\mathrm{rev}} x_2\prec_{\mathrm{rev}} x_3\prec_{\mathrm{rev}} x_4\prec_{\mathrm{rev}} x_5\prec_{\mathrm{rev}} x_6$, these six binomials form a Gr\"obner basis. 
Therefore, we have
\[
\ini_{\prec_{\mathrm{rev}}}(I)=(x_1x_2, x_1x_3, x_1x_5, x_2x_3, x_2x_4, x_4x_5)=J_{L_7},
\]
so (\ref{A1}) holds.
Thus, $S_{L_7}/I\cong \mathrm{Im}(\phi)$ is a homogeneous ASL domain on $L_7$.
Hence $L_7$ is integral, and Theorem~\ref{thm:main1}~(4) follows.

\subsection*{Acknowledgments}
The authors gratefully acknowledge the assistance of generative AI, particularly ChatGPT (OpenAI), during the exploratory and editorial stages of this work. It was used to help organize preliminary drafts, polish the English exposition, and prepare the \LaTeX{} source; all mathematical statements, proofs, and references remain the sole responsibility of the authors.
Koji Matsushita was supported by Grant-in-Aid for JSPS Fellows Grant JP25KJ0047.
Sora Miyashita was supported by Grant-in-Aid for JSPS Fellows Grant JP25KJ1744.    
Koichiro Tani gratefully acknowledges support from JST SPRING, Grant Number JPMJSP2138.

\bibliographystyle{plain}
\bibliography{references}

\end{document}